\definecolor{LightCyan}{rgb}{0.88,1,1}
\newcommand{\HEI}[1]{\bf Hybrid-EI}
\newtheorem{theorem}{Theorem}
\newtheorem{lemma}{Lemma}
\newtheorem{remark}{Remark}
\newtheorem{example}{Example}
\begin{document}
%
\title{{Event-Triggered Stabilization of Linear Time-Delay Systems via Halanay-Type Inequality}}
%
%
%

\author{Kexue~Zhang 
\thanks{This work was supported by the Natural Sciences and Engineering Research Council of Canada (NSERC) under the grant {RGPIN-2022-03144}.}
\thanks{K. Zhang is with the Department
of Mathematics and Statistics, Queen's University, Kingston, Ontario K7L 3N6, Canada (e-mail: kexue.zhang@queensu.ca).}
 }

\maketitle
\thispagestyle{empty}
%
\begin{abstract}
This paper studies the event-triggered control problem for time-delay systems. A novel event-triggering scheme is proposed to exponentially stabilize a class of linear time-delay systems. By employing a new Halanay-type inequality and the Lyapunov function method, sufficient conditions on the design of control gain and selection of parameters in the proposed event-triggering scheme are derived to both ensure the exponential stability of the closed-loop system and exclude Zeno behavior. Two examples are given to demonstrate the effectiveness of the theoretical result.

\end{abstract}
%
\begin{IEEEkeywords}
Exponential stability, event-triggered control, Halanay-type inequality, linear system, time delay

\end{IEEEkeywords}

\section{Introduction}\label{Sec1}

\IEEEPARstart{D}{ue} to the ubiquitous of time delays in the real world, time-delay systems have been investigated extensively (see, e.g.,~\cite{KG-JC-VLK:2003,DL-DVS:2011}), and various control methods have been successfully applied to time-delay systems (see, e.g.,~\cite{JPR:2003,EF:2014}). Among them, the event-triggered control approach has gained increasing attention in recent years (see, e.g.,~\cite{PT:2007,WPH-KHJ-PT:2012,CN-EG-JC:2019}). Compared with the conventional feedback control and sampled-data control, the method of event-triggered control updates the control input only when a certain measurement of the state error violates a well-designed threshold, and the control inputs remain unchanged between two consecutive control updates (see~\cite{WPH-KHJ-PT:2012}). Such a threshold violation is called an \emph{event}. Since the event times are determined explicitly by the event occurrence, one of the main challenges in the study of event-triggered control problems is to exclude \emph{Zeno behavior} (see~\cite{CN-EG-JC:2019}), a phenomenon of infinitely many event times over some finite time interval, from the control system.

Time-delay effects have been considered with the event-triggered control method in the following two fashions: 1) time delays in the event-triggered controllers; 2) time delays within the system dynamics. Most of the recent results focused on event-triggered controllers with time delays, and the system dynamics is free of delay effects (see, e.g.,~\cite{WZ-ZPJ:2015,AS-EF:2016,EN-PT-JC:2020}). Very few results have been reported on event-triggered control for time-delay systems. One of the major difficulties in this research area is to rule out Zeno behavior from the event-triggered control systems. A straightforward way to exclude Zeno behavior is to enforce a uniform lower bound of the \emph{inter-event times} which are the time periods between consecutive event times. For example, the event-triggered control method based on sampled-data information (see, e.g.,~\cite{LZ-ZW-HG-XL:2015,AB-PP:2021}) and periodic event-triggered control approach (see, e.g.,~\cite{LW-ZW-TH-GW:2015,AB-PP-IDL-MDF:2021}) both require certain lower bound of the inter-event times, and hence Zeno behavior can be excluded naturally. Another commonly used approach is the hybrid event-triggered control method. With the help of multiple control approaches, Zeno behavior can be excluded (see, e.g.,~\cite{ZF-CG-HG:2017,KZ-BG:2021}). It has been shown in~\cite{KZ-BG:2021} that a direct generalization of the results for delay-free systems from~\cite{PT:2007} to the event-triggered control problems of time-delay systems may fail to work. Hence, it is worthwhile to explore new event-triggering schemes for time-delay systems. Other than the above-mentioned methods, several novel event-triggering mechanisms have been successfully designed for some specific time-delay systems, and these results either require the memory of delayed system states (see, e.g.,~\cite{SD-NM-JFG:2014}) or apply to a certain particular type of system delays (see, e.g.,~\cite{QZ:2019,PZ-TL-ZPJ:2020}). Recently, an event-triggering algorithm was proposed in~\cite{KZ-BG-EB:2021} for the general nonlinear time-delay systems by the method of Lyapunov-Krasovskii functionals. Zeno behavior can be excluded if the function portion of the Lyapunov candidate is in the quadratic form, but a uniform lower bound of the inter-event times cannot be derived. The proposed event-triggering algorithm may not be applicable if such a lower bound does not exist, since physical hardware can only run with some finite frequencies. It can be seen that event-triggered control problems of time-delay systems deserve insightful explorations.

Halanay-type inequality introduced in~\cite[pp. 378]{AH:1966} is one of the widely used approaches for the stability analysis of time-delay systems. Numerous generalized Halanay-type inequalities have been constructed and applied to various control and stability analysis problems (see, e.g.,{~\cite{SM-KG:2000,FM-MM:2022,PP-2022} and references therein}). Therefore, it is worthwhile to investigate the event-triggered control problems of time-delay systems via Halanay-type inequalities. Motivated by the above discussions, we study the event-triggered control problem for a class of linear time-delay systems. We propose a new event-triggered control algorithm that activates control updates when a certain combination of system states and sampling error goes over an exponential threshold. It should be noted that the proposed algorithm is independent of system delays and hence simple to implement. To ensure the exponential stability of the closed-loop system, a novel Halanay-type inequality is introduced for the exponential convergence of non-negative functions to the origin. Compared with the existing Halanay-type inequalities (see, e.g., \cite{AH:1966}), the new one allows an exponential-convergent perturbation to the upper bound of the Dini derivative for the involved non-negative functions. Such a perturbation is essential to exclude Zeno behavior from the control system with the proposed event-triggering condition. By employing the Lyapunov function method and the Halanay-type inequality, sufficient conditions on the parameters in the event-triggering condition are derived, and the feedback control gain is derived to guarantee the closed-loop system is globally exponentially stable. Furthermore, sufficient conditions on the event-triggering parameters are also obtained to rule out Zeno behavior. Other than the major differences in the stability analysis method and the event-triggering algorithm, a detailed comparison with the existing results on event-triggered control for time-delay systems in~\cite{KZ-BG-EB:2021} can be found in Remark~\ref{Remark2}.

The rest of this letter is organized as follows. Section~\ref{Sec2} formulates the event-triggered control problem and introduces the event-triggering condition. A new Halanay-type inequality is introduced in Section~\ref{Sec3}, and a stabilization criterion is constructed in Section~\ref{Sec4}. We give two examples in Section~\ref{Sec5} to numerically illustrate the main result. Some conclusions are drawn in Section~\ref{Sec6}.

\emph{Notation.} Let $\mathbb{N}$ and $\mathbb{R}^+$ represent the set of non-negative integers and the set of non-negative reals. We use $\|x\|$ to denote the Euclidean norm of a vector $x\in\mathbb{R}^n$ and $\|A\|$ to represent the spectral norm of matrix $A\in\mathbb{R}^{n\times n}$.  For $n\times m$ matrix $B$, we use $B^\top$ to denote its transpose. For symmetric matrix $P\in\mathbb{R}^{n\times n}$, we use $P<0$ to indicate it is negative definite and $\lambda_{max}(P)$ to represent the largest eigenvalue of $P$. For a real-valued function $W:\mathbb{R}^+\rightarrow\mathbb{R}^+$, the upper Dini derivative is defined as follows:
\[
\mathrm{D}^+ W(t)=\limsup_{\Delta t\rightarrow 0^+} \frac{W(t+\Delta t)-W(t)}{\Delta t}.
\]

\section{Problem Formulation}\label{Sec2}
Consider the following linear control system with time-varying delay
\begin{align}\label{system}
\left\{\begin{array}{ll}
\dot{x}(t)=A_1 x(t) +A_2 x(t-\tau(t)) +B u(t),\cr
x(s)=\phi(s),~~s\in [-\bar{\tau},0],
\end{array}\right.
\end{align}
where $t\in\mathbb{R}^+$, matrices $A_1,A_2\in\mathbb{R}^{n\times n}$ and $B\in\mathbb{R}^{n\times m}$ are given; $u(t)=Kx(t)$ is the feedback control input, and $K\in \mathbb{R}^{m\times n}$ is the control gain to be designed; {continuous function $\tau(t)\in [0,\bar{\tau}]$ represents the bounded time-varying delay}, and $\bar{\tau}\geq 0$ is the maximum involved delay; initial function $\phi:[-\bar{\tau},0]\rightarrow \mathbb{R}^n$ is continuous. Control system~\eqref{system} with the sampled-data implementation is given as follows:
\begin{align}\label{ETCsystem}
\left\{\begin{array}{ll}
\dot{x}(t)=A_1 x(t) +A_2 x(t-\tau(t)) +B u(t),\cr 
u(t)=K x(t_k), ~t\in[t_k,t_{k+1}) \textrm{~for~} k\in \mathbb{N},
\end{array}\right.
\end{align}
where $\{t_{k}\}_{k\in\mathbb{N}}$ is the sequence of sampling times to be determined according to a certain triggering condition which will be introduced shortly. To do so, we define the sampling error 
\[
\epsilon(t)=x(t_k)-x(t),~t\in[t_k,t_{k+1}) \textrm{~for~} k\in \mathbb{N},
\]
then closed-loop system~\eqref{ETCsystem} can be written in terms of error~$\epsilon$:
\begin{align}\label{errorsystem}
\dot{x}(t)&=\left(A_1+BK\right) x(t) +A_2 x(t-\tau(t)) +BK \epsilon(t).
\end{align}
Consider Lyapunov function 
\[
V(t)=x^\top(t)Px(t)
\]
where $P\in\mathbb{R}^{n\times n}$ is {a symmetric positive definite matrix}. The sequence $\{t_k\}_{k\in\mathbb{N}}$ is determined in the following manner:
\begin{equation}\label{eventtime}
t_{k+1}=\inf\{t>t_k:~\mathcal{C}(x(t),\epsilon(t))\geq \zeta(t)\},
\end{equation}
where 
\[
\mathcal{C}(x(t),\epsilon(t))=2x^\top(t)PBK \epsilon(t)-\sigma x^\top(t)Px(t).
\]
and
\[
\zeta(t)=\alpha \|V_0\|_{\bar{\tau}} e^{-\beta t}
\] 
with $V_0(s)=x^\top(s)Px(s)$ for $s\in[-\bar{\tau},0]$ and $\|V_0\|_{\bar{\tau}}=\sup_{-\bar{\tau}\leq s\leq 0} \{V(s)\}$. Here, constants $\alpha,\beta>0$ and $\sigma\geq 0$. 

According to the definition of $t_{k+1}$ in~\eqref{eventtime}, the control input $u$ is updated when the \emph{event} $\mathcal{C}(x(t),\epsilon(t))\geq \zeta(t)$ occurs. The times in the sequence $\{t_k\}_{k\in\mathbb{N}}$ are called \emph{event times}, and the time difference between consecutive events are called inter-event times, that is, $\{t_{k+1}-t_k\}_{k\in\mathbb{N}}$. The sampled-data control in~\eqref{ETCsystem} with event times determined by~\eqref{eventtime} is called \emph{event-triggered control}. Since the event times are implicitly defined by~\eqref{eventtime}, accumulation of event times might occur. Such a phenomenon is called \emph{Zeno behavior}, that is, infinitely many events occur in a finite period. Therefore, it is essential to exclude Zeno behavior from the event-triggered control system~\eqref{ETCsystem}. 

The objective of this study is to design the feedback control gain $K$ and parameters $\alpha,\beta,\sigma$ in~\eqref{eventtime} so that the event-triggered control system~\eqref{ETCsystem} is both globally exponentially stable and free of Zeno behavior. If $\alpha=0$ in $\zeta$, then~\eqref{eventtime} reduces to the one discussed in~\cite{WPH-KHJ-PT:2012} for linear systems without delays. In Section~\ref{Sec4}, we will show that this exponential function is important to exclude Zeno behavior.

\section{Halanay-Type Inequality}\label{Sec3}
In this section, we introduce a new Halanay-type inequality which plays an important role in the design of the event-triggered control mechanism for the stabilization of system~\eqref{ETCsystem}.

\begin{lemma}\label{Lemma} 
Let $a,b,\alpha,\beta,r$ be positive constants, and $a>b+\alpha$. Let $v:[-r,\infty)\rightarrow \mathbb{R}^+$ be a continuous function and satisfy the following inequality
\begin{equation}\label{Halanay}
\mathrm{D}^+ v(t)\leq -a v(t) + b\sup_{-r\leq s\leq 0}\{v(t+s)\} + \alpha \|v_0\|_r e^{-\beta t}
\end{equation}
for $t\in\mathbb{R}^+$. Then the following inequality holds
{\[
v(t)\leq \|v_0\|_r e^{-\eta t} \textrm{~for~} t\in\mathbb{R}^+,
\]}
where $\|v_0\|_r=\sup_{-r\leq s\leq 0}\{v(s)\}$, {$\eta=\min\{\lambda,\beta\}$} and $\lambda$ is the real solution of the equation {
\[
a=b e^{\lambda r} +\lambda +\alpha.
\]}
\end{lemma}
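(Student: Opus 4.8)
The plan is to establish the result by a comparison argument against a decaying exponential barrier, after first checking that the claimed decay rate is well defined. First I would verify that the equation $a = be^{\lambda r} + \lambda + \alpha$ has a unique positive root. Setting $f(\lambda) = be^{\lambda r} + \lambda + \alpha - a$, one has $f(0) = b + \alpha - a < 0$ by the hypothesis $a > b + \alpha$, while $f$ is strictly increasing (its derivative $bre^{\lambda r} + 1$ is positive) and tends to $+\infty$ as $\lambda \to \infty$; hence a unique real solution $\lambda > 0$ exists and $\eta = \min\{\lambda,\beta\} > 0$. I would also record the consequence that, since $\eta \le \lambda$, monotonicity of the exponential gives $-a + be^{\eta r} + \alpha \le -a + be^{\lambda r} + \alpha = -\lambda \le -\eta$.

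Next, I would fix an arbitrary $\epsilon > 0$ and set $w(t) = (\|v_0\|_r + \epsilon)e^{-\eta t}$; the goal then becomes to show $v(t) \le w(t)$ for all $t \ge 0$, after which letting $\epsilon \to 0^+$ yields the lemma. On $[-r,0]$ the inequality holds strictly, since $v(t) \le \|v_0\|_r < \|v_0\|_r + \epsilon \le (\|v_0\|_r+\epsilon)e^{-\eta t} = w(t)$, and by continuity this strict inequality persists on a right-neighborhood of $0$. Suppose, for contradiction, that $v(t) \le w(t)$ fails somewhere, and let $t^* = \inf\{t > 0 : v(t) > w(t)\}$; then $t^* > 0$, one has $v \le w$ on $[-r, t^*]$, $v(t^*) = w(t^*)$, and there is a sequence $t_n \downarrow t^*$ with $v(t_n) > w(t_n)$.

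I would then assemble two competing estimates at $t^*$. From the geometry, the difference quotients along $t_n \downarrow t^*$ satisfy $\frac{v(t_n)-v(t^*)}{t_n - t^*} > \frac{w(t_n)-w(t^*)}{t_n-t^*} \to w'(t^*)$, whence $\mathrm{D}^+ v(t^*) \ge w'(t^*) = -\eta\, w(t^*)$. On the other hand, applying~\eqref{Halanay} at $t^*$ and using $v(t^*) = w(t^*)$ together with the bound $\sup_{-r\le s\le 0} v(t^*+s) \le \sup_{-r\le s\le 0} w(t^*+s) = w(t^*)e^{\eta r}$ and the strict estimate $\alpha\|v_0\|_r e^{-\beta t^*} < \alpha(\|v_0\|_r+\epsilon)e^{-\eta t^*} = \alpha\, w(t^*)$ (here $\epsilon > 0$ supplies the strictness while $\beta \ge \eta$ gives $e^{-\beta t^*}\le e^{-\eta t^*}$), I obtain $\mathrm{D}^+ v(t^*) < w(t^*)\bigl(-a + be^{\eta r} + \alpha\bigr) \le -\eta\, w(t^*) = w'(t^*)$. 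The two conclusions contradict each other, so $v \le w$ holds throughout, and the proof concludes by sending $\epsilon \to 0$.

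I expect the main obstacle to be the careful handling of the upper Dini derivative at the contact point $t^*$: one must rule out the possibility that $v$ merely touches the barrier $w$ from below without crossing it. This is exactly why I introduce the strict margin $\epsilon$ and define $t^*$ through the strict exceedance set $\{v > w\}$ rather than $\{v \ge w\}$; the slack from $\epsilon > 0$, reinforced by $\beta \ge \eta$, is what upgrades the derivative estimate to the strict inequality $\mathrm{D}^+ v(t^*) < w'(t^*)$ needed to contradict $\mathrm{D}^+ v(t^*) \ge w'(t^*)$. The remaining steps are routine monotonicity and continuity arguments.
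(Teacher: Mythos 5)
Your proof is correct and follows the same overall strategy as the paper's: compare $v$ against an inflated exponential barrier, locate the first contact time $t^*$, and derive a contradiction there using the delay-sup estimate (producing the factor $e^{\eta r}$), the bound $e^{-\beta t^*}\le e^{-\eta t^*}$ from $\beta\ge\eta$, and the characteristic equation for $\lambda$; your single margin $\epsilon>0$ plays exactly the role of the paper's two parameters $\varepsilon>1$ and $\xi>0$.

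The one genuine difference is your handling of the contact point, and it is a sharpening rather than a deviation. The paper defines $t^*=\inf\{t\ge 0:\,w(t)\ge 0\}$ for $w(t)=v(t)-\varepsilon(\|v_0\|_r+\xi)e^{-\eta t}$, shows $\mathrm{D}^+w(t^*)<0$, and concludes that $w$ is ``strictly decreasing on $(t^*-\delta,t^*+\delta)$,'' contradicting the definition of $t^*$. Strictly speaking, a negative right Dini derivative at the single point $t^*$ only yields $w<0$ on some interval $(t^*,t^*+\delta)$, and that alone does not contradict $t^*=\inf\{t:\,w(t)\ge 0\}$, since $t^*$ itself lies in that set: a priori, $v$ could touch the barrier at an isolated instant and fall back below it. Your formulation closes precisely this loophole: defining $t^*$ through the strict exceedance set $\{v>w\}$ guarantees a sequence $t_n\downarrow t^*$ with $v(t_n)>w(t_n)$, which yields the lower bound $\mathrm{D}^+v(t^*)\ge w'(t^*)$ that collides head-on with the upper bound $\mathrm{D}^+v(t^*)<w'(t^*)$ obtained from~\eqref{Halanay}. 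Your preliminary verification that $a=be^{\lambda r}+\lambda+\alpha$ has a unique positive root (so that $\eta>0$ is well defined), which the paper takes for granted, is also a worthwhile addition.
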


\begin{proof}
Let 
{\[
w(t)=v(t)-\varepsilon \left(\|v_0\|_r +\xi \right) e^{-\eta t}
\] 
for some arbitrary $\varepsilon>1$ and $\xi>0$}. We will show $w(t)< 0$ by a contradiction argument. Suppose that there exists some $t>0$ such that $w(t)\geq 0$ and define 
\[
t^*=\inf\left\{ t\geq 0:~ w(t)\geq 0 \right\},
\] 
then $w(t^*)=0$ and $w(t)< 0$ for $0\leq t<t^*$, {that is,
\[
v(t^*)=\varepsilon \left(\|v_0\|_r +\xi \right) e^{-\eta t^*}
\]
and
\[
v(t)< \varepsilon \left(\|v_0\|_r +\xi \right) e^{-\eta t} \textrm{~for~} 0\leq t<t^*.
\]
For $-r\leq t<0$, we have
\[
v(t)\leq \sup_{-r\leq s<0}\{v(s)\} \leq \|v_0\|_r < \varepsilon \left(\|v_0\|_r +\xi \right) <  \varepsilon \left(\|v_0\|_r +\xi \right) e^{-\eta t}.
\]
Note that $e^{-\eta t}>1$ since $\eta$ is a positive constant and $t<0$.

Hence, we have 
\[
v(t)< \varepsilon \left(\|v_0\|_r +\xi \right) e^{-\eta t} \textrm{~for~all~} -r\leq t<t^*,
\]
and then,
\begin{align*}
\sup_{-r\leq s\leq 0}\left\{v(t^*+s)\right\}
&< \sup_{-r\leq s\leq 0}\left\{\varepsilon \left(\|v_0\|_r +\xi \right) e^{-\eta (t^*+s)}\right\} \cr
&\leq \varepsilon \left(\|v_0\|_r +\xi \right) e^{-\eta (t^*-r)} \cr
&= e^{\eta r}\varepsilon \left(\|v_0\|_r +\xi \right) e^{-\eta t^*}.
\end{align*}

From the above inequality and~\eqref{Halanay}, we have
\begin{align}\label{Diniw}
\mathrm{D}^+ w(t^*) &=\mathrm{D}^+ v(t^*) + \eta \varepsilon \left(\|v_0\|_r+\xi\right) e^{-\eta t^*} \cr
                    &\leq -a v(t^*) + b \sup_{-r\leq s\leq 0}\{v(t^*+s)\}+ \alpha \|v_0\|_r e^{-\beta t^*} \cr
                    &~~~+ \eta \varepsilon \left(\|v_0\|_r+\xi\right) e^{-\eta t^*}\cr
                    &< \left( -a+b e^{\eta r}+\eta \right) \varepsilon \left(\|v_0\|_r+\xi\right) e^{-\eta t^*}+ \alpha \|v_0\|_{r} e^{-\beta t^*} \cr
                    &< \left( -a+b e^{\eta r}+\eta + \alpha \right)\varepsilon \left(\|v_0\|_r+\xi\right) e^{-\eta t^*}. 
\end{align}
By the definition of $\lambda$ and the fact that $\eta\leq \lambda$, we get 
\[
-a+b e^{\eta r}+\eta + \alpha\leq 0,
\]}
and hence $\mathrm{D}^+ w(t^*)<0$ which implies $w$ is strictly decreasing in the interval $(t^*-\delta,t^*+\delta)$ for small enough $\delta>0$. This is a contradiction to the definition of $t^*$. Therefore, $w(t)<0$, that is, 
{\[
v(t)< \varepsilon (\|v_0\|_r+\xi) e^{-\eta t} \textrm{~for~} t\in\mathbb{R}^+.
\]}
{Letting $\varepsilon\rightarrow 1$ and $\xi\rightarrow 0$, we have 
\[
v(t)\leq \|v_0\|_r e^{-\eta t} \textrm{~for~} t\in\mathbb{R}^+.
\]}
This concludes the proof.
\end{proof}
\begin{remark}\label{Remark1}
If $\alpha=0$, then Lemma~\ref{Lemma} reduces to the Halanay-type inequality in~\cite{AH:1966} (see the lemma on page 378 of~\cite{AH:1966}). Hence, Lemma~\ref{Lemma} is less conservative than the conventional Halanay inequality. Moreover, the existence of this exponential function in~\eqref{Halanay} is essential in ruling out Zeno behavior from the event-triggered control system, which will be demonstrated in the following section. If $v(0)\not= 0$, then $\|v_0\|_{r}$ in~\eqref{Halanay} can be replaced by {$v(0)$} which simplifies the parameter selections in~\eqref{eventtime} as only the initial state is required instead of the entire initial function. {To be more specific, replacing $\|v_0\|_{r}$ by $v(0)$ in~\eqref{Halanay} will change the term $\alpha \|v_0\|_r e^{-\beta t^*}$ to $\alpha  v(0) e^{-\beta t^*}$ in the first inequality of~\eqref{Diniw}. By the fact that $v(0)\leq \|v_0\|_r$ we can see that the last inequality of~\eqref{Diniw} remains valid. Based on the above discussion, $\|v_0\|_{r}$ in~\eqref{Halanay} actually can be replaced by $v(s)$ for any $s\in[-r,0]$.} 
\end{remark}

{
\begin{remark}\label{Remark1.1}
Compared with the recent result in~\cite{FM-MM:2022}, Lemma~\ref{Lemma} has the following advantages. Lemma~\ref{Lemma} requires $a>b$ and then the parameter $\alpha$ in the triggering condition can be selected so that $a-b>\alpha$. However, the result in~\cite{FM-MM:2022} requires $a>b e^{a r}$, that is, the maximum involved delay $r$ is upper bounded by $\frac{1}{a}\ln(a/b)$. Therefore, Lemma~\ref{Lemma} can be applied to systems with any bounded time delays, while the result in~\cite{FM-MM:2022} is only applicable to systems with time delays smaller than $\frac{1}{a}\ln(a/b)$. Furthermore, the exponential convergence rate $\eta$ can be derived explicitly by solving the unique solution of the equation in Lemma~\ref{Lemma} and then comparing it with parameter $\beta$ in~\eqref{Halanay}.

\end{remark}}

\section{Stabilization Criterion}\label{Sec4}
In this section, we will employ the new Halanay-type inequality to derive the control gain $K$ and parameters $\alpha,\beta,\sigma$ in~\eqref{eventtime} to ensure exponential stability of~\eqref{ETCsystem} and also guarantee the non-existence of Zeno behavior.
\begin{theorem}\label{Th}
Suppose there exist positive constants $b$ and $h$, positive definite matrix $Q\in\mathbb{R}^{n\times n}$, and matrix $R\in\mathbb{R}^{m\times n}$ such that the following inequality
\begin{align}\label{LMI}
\begin{bmatrix}
QA_1^\top+A_1Q +R^\top B^\top +BR +(b+h)Q & A_2Q \\
QA_2^\top & -bQ 
\end{bmatrix}<0
\end{align}
is satisfied, then~system~\eqref{ETCsystem} is globally exponentially stable with $P=Q^{-1}$, $K=RP$, constants $\alpha$ and $\sigma$ selected so that $h>\alpha+\sigma$. Moreover, system~\eqref{ETCsystem} does not exhibit Zeno behavior for arbitrary $\beta>0$. If we further assume that $\beta\leq \lambda$ where $\lambda$ is the unique real solution of the equation 
\begin{equation}\label{lambda}
b+h-\sigma=b e^{\lambda \bar{\tau}} +\lambda +\alpha,
\end{equation}
then the inter-event times $\{t_{k+1}-t_k\}_{k\in\mathbb{N}}$ are uniformly lower bounded by a positive quantity $\Delta$, that is, $t_{k+1}-t_k\geq \Delta>0$ for all $k\in\mathbb{N}$.
\end{theorem}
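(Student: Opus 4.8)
The plan is to treat the three assertions in turn, using Lemma~\ref{Lemma} for the stability claim and a Gr\"onwall-type growth estimate on the sampling error for the Zeno analysis.

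\emph{Exponential stability.} First I would compute the upper Dini derivative of $V(t)=x^\top(t)Px(t)$ along~\eqref{errorsystem}, obtaining $\mathrm{D}^+V = x^\top[(A_1+BK)^\top P+P(A_1+BK)]x + 2x^\top PA_2x(t-\tau(t)) + 2x^\top PBK\epsilon(t)$. Between events the rule~\eqref{eventtime} guarantees $\mathcal{C}(x,\epsilon)\le\zeta(t)$, i.e. $2x^\top PBK\epsilon \le \sigma x^\top Px + \alpha\|V_0\|_{\bar\tau}e^{-\beta t}$, so the last term can be replaced by $\sigma V(t)+\alpha\|V_0\|_{\bar\tau}e^{-\beta t}$. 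To control the remaining quadratic form I would apply the congruence $\mathrm{diag}(P,P)$ to~\eqref{LMI}: substituting $P=Q^{-1}$ and $R=KQ$ turns~\eqref{LMI} into a negative-definite matrix whose $(1,1)$ block is $(A_1+BK)^\top P+P(A_1+BK)+(b+h)P$, off-diagonal block $PA_2$, and $(2,2)$ block $-bP$. Reading this inequality as a quadratic form in $(x^\top,\,x^\top(t-\tau(t)))$ gives $x^\top[(A_1+BK)^\top P+P(A_1+BK)]x+2x^\top PA_2x(t-\tau(t)) \le -(b+h)V(t)+bV(t-\tau(t))$. Combining the two bounds yields $\mathrm{D}^+V(t)\le -(b+h-\sigma)V(t)+bV(t-\tau(t))+\alpha\|V_0\|_{\bar\tau}e^{-\beta t}$, and since $V(t-\tau(t))\le\sup_{-\bar\tau\le s\le 0}V(t+s)$ this is exactly~\eqref{Halanay} with $a=b+h-\sigma$, $r=\bar\tau$, $v=V$. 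The hypothesis $h>\alpha+\sigma$ makes $a>b+\alpha$, so Lemma~\ref{Lemma} applies and gives $V(t)\le\|V_0\|_{\bar\tau}e^{-\eta t}$; the equivalence $\lambda_{\min}(P)\|x\|^2\le V\le\lambda_{\max}(P)\|x\|^2$ then yields global exponential stability, and the rate equation of Lemma~\ref{Lemma} is precisely~\eqref{lambda}.

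\emph{Error growth estimate.} For the Zeno analysis I would fix an event interval $[t_k,t_{k+1})$, on which $\epsilon(t_k)=0$ and $\dot\epsilon=-\dot x=-(A_1+BK)x-A_2x(t-\tau(t))-BK\epsilon$. Using the exponential bound $\|x(t)\|\le \kappa e^{-\eta t/2}$ (and the same for the delayed state up to a factor $e^{\eta\bar\tau/2}$) from the stability part, I would estimate $\tfrac{d}{dt}\|\epsilon\|\le c_1 e^{-\eta t/2}+c_2\|\epsilon\|$ with $c_2=\|BK\|$, and integrate from $t_k$ (where $\epsilon$ vanishes) to get $\|\epsilon(t)\|\le \tfrac{c_1}{c_2} e^{-\eta t_k/2}\bigl(e^{c_2(t-t_k)}-1\bigr)$, which tends to $0$ as $t\downarrow t_k$. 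On the other hand, the definition of $t_{k+1}$ forces $2x^\top(t_{k+1})PBK\epsilon(t_{k+1})=\zeta(t_{k+1})+\sigma V(t_{k+1})\ge\alpha\|V_0\|_{\bar\tau}e^{-\beta t_{k+1}}$; bounding the left side by $2\|PBK\|\,\|x(t_{k+1})\|\,\|\epsilon(t_{k+1})\|$ and using $\|x(t_{k+1})\|\le\kappa e^{-\eta t_{k+1}/2}$ gives a lower bound $\|\epsilon(t_{k+1})\|\ge C' e^{-(\beta-\eta/2)t_{k+1}}$. The essential point, and the reason the term $\alpha\|V_0\|_{\bar\tau}e^{-\beta t}$ is needed, is that this lower bound is \emph{state-independent}: without it the delayed term $A_2x(t-\tau)$ in $\dot\epsilon$ could drive $\|\epsilon\|$ up even while $\|x\|$ is small, so the usual relative estimate on $\|\epsilon\|/\|x\|$ would break down for delay systems.

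\emph{Concluding the dichotomy.} Equating the two estimates and writing $\Delta_k=t_{k+1}-t_k$, then dividing by $e^{-\eta t_k/2}$, produces an inequality of the form $C' e^{(\eta-\beta)t_k}e^{-(\beta-\eta/2)\Delta_k}\le \tfrac{c_1}{c_2}\bigl(e^{c_2\Delta_k}-1\bigr)$, whose right-hand side tends to $0$ as $\Delta_k\to 0$. For arbitrary $\beta>0$ I would argue by contradiction: if the events accumulated at some finite $T$, then all $t_k\le T$, so $e^{(\eta-\beta)t_k}$ is bounded below by the positive constant $e^{(\eta-\beta)T}$ (recall $\eta=\min\{\lambda,\beta\}$); the left-hand side would stay bounded away from $0$ while the right-hand side vanishes as $\Delta_k\to0$, a contradiction, so no Zeno behavior occurs. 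When additionally $\beta\le\lambda$ we have $\eta=\beta$, the factor $e^{(\eta-\beta)t_k}=1$, and the inequality becomes independent of $t_k$, so it admits a uniform positive solution $\Delta$ with $t_{k+1}-t_k\ge\Delta$ for all $k$. I expect the Zeno part to be the main obstacle: propagating the exponential state bound into a clean Gr\"onwall estimate for $\epsilon$, tracking the $t_k$-dependence, and isolating the role of the threshold $\alpha\|V_0\|_{\bar\tau}e^{-\beta t}$ in taming the delayed term are the delicate steps, whereas the stability part is a fairly direct congruence-plus-Halanay computation.
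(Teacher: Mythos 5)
Your proposal is correct and follows essentially the same route as the paper: the stability part (congruence of~\eqref{LMI} by $\mathrm{diag}(P,P)$ with $P=Q^{-1}$, $R=KQ$, the triggering bound on $2x^\top PBK\epsilon$, then Lemma~\ref{Lemma} with $a=b+h-\sigma$) is identical, and your Zeno analysis rests on the same two ingredients as the paper's --- an upper bound on the sampling error over $[t_k,t_{k+1})$ and the state-independent lower bound $\alpha\|V_0\|_{\bar\tau}e^{-\beta t_{k+1}}$ supplied by the exponential threshold --- combined through the same dichotomy $\eta=\lambda$ (contradiction at a finite accumulation point) versus $\eta=\beta$ (a $t_k$-independent inequality yielding a uniform $\Delta$). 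The only difference is technical rather than conceptual: you bound $\|\epsilon\|$ via a Gr\"onwall estimate on $\dot\epsilon=-\dot x$, giving $\tfrac{c_1}{c_2}e^{-\eta t_k/2}\bigl(e^{c_2(t-t_k)}-1\bigr)$, whereas the paper integrates the state equation directly to obtain the bound $\delta_1\bigl(e^{-\eta T_k/2}-e^{-\eta T_k}\bigr)+\delta_2 T_k e^{-\eta T_k/2}$; both lead to the same final inequality and conclusions.
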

\begin{proof}
Let $a=b+h-\sigma$, and consider Lyapunov function 
\[
V(t)=x^\top(t) P x(t)
\]
{ which is a continuous function of time $t$}. Along the trajectory of~\eqref{errorsystem}, we have
\begin{align}\label{dotV}
&\dot{V}(t)+a V(t) -b \sup_{-\bar{\tau}\leq s\leq 0}\{V(t+s)\}\cr
\leq& 2\dot{x}^\top(t) P x(t) +a x^\top(t) P x(t) -bx^\top(t-\tau(t))Px(t-\tau(t))\cr
 = &  x^\top(t)\left( (A_1+BK)^\top P+ P(A_1+BK) +aP \right) x(t) \cr
& +2x^\top(t)PA_2x(t-\tau(t)) + 2x^\top(t)PBK\epsilon(t)\cr
& -bx^\top(t-\tau(t))Px(t-\tau(t))\cr
 =&   \begin{bmatrix}
x^\top(t) & x^\top(t-\tau(t))
\end{bmatrix} \mathcal{S} \begin{bmatrix}
x^\top(t) \\
x^\top(t-\tau(t))
\end{bmatrix}+ 2x^\top(t)PBK\epsilon(t)\cr
\end{align}
where
\[
\mathcal{S}=\begin{bmatrix}
(A_1+BK)^\top P+ P(A_1+BK) +aP & P A_2 \\
A_2^\top P & -bP 
\end{bmatrix}.
\]
According to the event-triggering condition within~\eqref{eventtime}, we get
\begin{equation}\label{trigger}
2x^\top(t)PBK\epsilon(t) \leq \sigma x^\top(t)Px(t) +\alpha \|V_0\|_{\bar{\tau}} e^{-\beta t},
\end{equation}
then~\eqref{dotV} and~\eqref{trigger} imply 
\begin{align}\label{dotV1}
&\dot{V}(t)+a V(t) -b \sup_{-\bar{\tau}\leq s\leq 0}\{V(t+s)\}\cr
\leq& \begin{bmatrix}
x^\top(t) & x^\top(t-\tau(t))
\end{bmatrix} \bar{\mathcal{S}} \begin{bmatrix}
x^\top(t) \\
x^\top(t-\tau(t))
\end{bmatrix}+\alpha \|V_0\|_{\bar{\tau}} e^{-\beta t},
\end{align}
where
\[
\bar{\mathcal{S}}={\mathcal{S}}+ \begin{bmatrix}
 \sigma P & 0\cr
0 & 0
\end{bmatrix}.
\]
Multiply both sides of $\bar{\mathcal{S}}$ by 
\[
\begin{bmatrix}
P^{-1} & 0 \cr
0 & P^{-1}
\end{bmatrix},
\]
then~\eqref{LMI} implies $\bar{\mathcal{S}}<0$. Hence, we can conclude from~\eqref{dotV1} that
\begin{align}
\dot{V}(t)\leq -a V(t) +b \sup_{-\bar{\tau}\leq s\leq 0}\{V(t+s)\} +\alpha \|V_0\|_{\bar{\tau}} e^{-\beta t},
\end{align}
then it can be derived from Lemma~\ref{Lemma} that
\begin{equation}\label{Vbound}
V(t)\leq \|V_0\|_{\bar{\tau}} e^{-\eta t},~~t\in\mathbb{R}^+,
\end{equation}
where $\eta=\min\{\lambda,\beta\}$ and $\lambda$ is the solution of equation~\eqref{lambda}. {The global exponential stability of system~\eqref{ETCsystem} then follows from the definition of $V$ and~\eqref{Vbound}.}

Next, we will show system~\eqref{ETCsystem} with event times determined by~\eqref{eventtime} does not exhibit Zeno behavior. Integrating both sides of~\eqref{ETCsystem} from $t_k$ to $t$ for $t\in[t_k,t_{k+1})$ yields
\begin{align}\label{ebound}
&\|x(t)-x(t_k)\| \cr
=& \| \int^t_{t_k} \left[ A_1x(s) +A_2x(s-\tau(s)) +BKx(t_k) \right] \mathrm{d}s \|\cr
\leq& \int^t_{t_k} \left[ \|A_1\| \hskip.5mm \|x(s)\| +\|A_2\| \hskip.5mm \|x(s-\tau(s))\| +\|BK\| \hskip.5mm \|x(t_k)\| \right] \mathrm{d}s.  \cr
\end{align}
From~\eqref{Vbound}, we have
{
\begin{equation}\label{xbound}
\|x(t)\|\leq \sqrt{\frac{\|V_0\|_{\bar{\tau}}}{\lambda_{min}(P)}} e^{-\eta t/2}.
\end{equation} }
We then conclude from~\eqref{ebound} and~\eqref{xbound} that
{
\begin{align}\label{ebound1}
&\|x(t)-x(t_k)\| \cr 
\leq & \sqrt{\frac{\|V_0\|_{\bar{\tau}}}{\lambda_{min}(P)}}  \frac{2}{\eta} \left(\|A_1\| +\|A_2\|e^{\eta \bar{\tau}/2}\right)\left( e^{-\eta t_k/2} -e^{-\eta t/2} \right)\cr
     & + \sqrt{\frac{\|V_0\|_{\bar{\tau}}}{\lambda_{min}(P)}} \|BK\| (t-t_k) e^{-\eta t_k/2}
\end{align} }
Multiply both sides of~\eqref{ebound1} by $2\|x(t)\| \hskip.5mm \|PBK\|$, and we obtain with~\eqref{xbound} that
{
\begin{align}\label{ebound2}
&2\|x(t)\| \hskip.5mm \|PBK\| \hskip.5mm \|x(t)-x(t_k)\| \cr
\leq&  \delta_1 \|V_0\|_{\bar{\tau}} \left( e^{-\eta t_k/2} -e^{-\eta t/2} \right) e^{-\eta t/2}\cr
&+ \delta_2 \|V_0\|_{\bar{\tau}} (t-t_k) e^{-\eta t_k/2} e^{-\eta t/2}
\end{align} }
where
{
\[
\delta_1={\frac{\|PBK\|}{\lambda_{min}(P)}}  \frac{4}{\eta}  \left(\|A_1\| +\|A_2\|e^{\eta \bar{\tau}/2}\right)
\] }
and
{
\[
\delta_2=2{\frac{ \|PBK\| \hskip.5mm \|BK\|  }{\lambda_{min}(P)}}. 
\] }
According to the definition of event times in~\eqref{eventtime}, we have at $t=t_{k+1}$ that
{
\begin{align*}
&\|V_0\|_{\bar{\tau}} \left( \delta_1  \left( e^{-\eta t_k/2} -e^{-\eta t_{k+1}/2} \right) + \delta_2  (t_{k+1}-t_k) e^{-\eta t_k/2} \right) e^{-\eta t_{k+1}/2}  \cr
\geq& \sigma x^\top(t_{k+1})Px(t_{k+1}) +\alpha \|V_0\|_{\bar{\tau}} e^{-\beta t_{k+1}}\cr
\geq& \alpha \|V_0\|_{\bar{\tau}} e^{-\beta t_{k+1}}
\end{align*} }
which implies
{
\begin{align*}
&\delta_1  \left( e^{-\eta t_k/2} -e^{-\eta t_{k+1}/2} \right) e^{-\eta t_{k+1}/2}+ \delta_2  (t_{k+1}-t_k) e^{-\eta t_k/2} e^{-\eta t_{k+1}/2}\cr
\geq& \alpha e^{-\beta t_{k+1}}.
\end{align*} }
Multiply both sides of the above inequality by {$e^{\eta t_k}$} and define $T_k=t_{k+1}-t_k$, then we obtain
{
\begin{align}\label{contradiction}
\delta_1  \left( e^{-\eta T_k/2} -e^{-\eta T_{k}} \right) + \delta_2  T_k e^{-\eta T_k/2} \geq \alpha e^{-\beta T_{k}} e^{(\eta-\beta)t_k}.
\end{align} }

To show the event-triggered control system is free of Zeno behavior, we consider the following two scenarios.
\begin{itemize}
\item If $\beta>\lambda$, {then $\eta=\lambda$} and we will show Zeno behavior can be excluded by a contradiction argument. Suppose there exists a $\bar{t}<\infty$ such that {$\lim_{k\rightarrow\infty} t_k=\bar{t}$}. Then~\eqref{contradiction} implies 
{
\begin{align}\label{contradiction1}
\delta_1 \left( 1 -e^{-\eta T_{k}/2} \right) + \delta_2  T_k \geq \alpha e^{-(\beta-\eta/2) T_{k}} e^{(\eta-\beta)\bar{t}}
\end{align} }
for any $k\in\mathbb{N}$. Define a function
{
\[
g_1(T)=\alpha e^{-(\beta-\eta/2) T} e^{(\eta-\beta)\bar{t}}-\delta_1 \left( 1 -e^{-\eta T/2} \right) - \delta_2  T
\] }
then {$g_1(0)=\alpha e^{(\eta-\beta)\bar{t}}>0$}, $g_1(T)\rightarrow -\infty$ as $T\rightarrow\infty$, and $\dot{g}_1(T)<0$, that is, $g_1(T)$ strictly decreases from a positive quantity to $-\infty$ as $T$ goes to $\infty$. Hence, there exists a unique {$\bar{T}>0$} such that $g_1(\bar{T})=0$, and~\eqref{contradiction1} implies $t_{k+1}-t_k\geq \bar{T}$ for any $k\in\mathbb{N}$ which is a contradiction to the fact that $\lim_{k\rightarrow\infty} t_k=\bar{t}$. Therefore,~system~\eqref{ETCsystem} does not exhibit Zeno behavior.

\item If $\beta\leq \lambda$, {then $\eta=\beta$} and we can derive from~\eqref{contradiction} that
{
\begin{align}\label{contradiction2}
\delta_1 \left( 1 -e^{-\eta T_{k}/2} \right) + \delta_2  T_k \geq \alpha e^{-\eta T_{k}/2} 
\end{align} }
for any $k\in\mathbb{N}$. Define a function
{
\[
g_2(T)=\alpha e^{-\eta T/2} -\delta_1 \left( 1- e^{-\eta T/2} \right) - \delta_2  T,
\] }
and similar to the previous discussion, we can conclude from the facts $g_2(0)=\alpha>0$, $g_2(T)\rightarrow-\infty$ as $T\rightarrow\infty$, and $\dot{g}_2(T)<0$ for all $T\geq 0$ that there exists a unique solution $\tilde{T}>0$ to the equation $g_2(T)=0$. Hence, $T_k\geq \tilde{T}$ for all $k\in\mathbb{N}$, that is, the inter-event times $\{t_{k+1}-t_k\}_{k\in\mathbb{N}}$ have a uniform lower bound.
\end{itemize}
The proof of this theorem is complete.
\end{proof}

\begin{remark}\label{Remark2}
It can be observed from the proof that the exponential function~$\zeta(t)$ in~\eqref{eventtime} plays an important role in ruling out Zeno behavior. A similar exponential function has been considered in the event-triggering condition in~\cite{KZ-BG-EB:2021} for time-delay systems by using the method of Lyapunov-Krasovskii functionals. In this study, Lyapunov function method is employed with the help of a new Halanay-type inequality. The proposed event-triggering condition is also different from the one in~\cite{KZ-BG-EB:2021}. We enforce $2x^\top PBK \epsilon$
to be smaller than $\sigma x^\top P x+\zeta(t)$ in~\eqref{eventtime} which allows the norm of $\epsilon$ to be relatively large if $2x^\top PBK \epsilon$ is negative. However, the triggering condition in~\cite{KZ-BG-EB:2021} requires $\|\epsilon\|$ to be bounded. See Example~\ref{Example1} in Section~\ref{Sec5} for an illustration with Fig.~\ref{fig1}. Other than the above mentioned major differences, the conditions on the parameters in the exponential functions are distinct in the following sense. We require the convergence rate $\beta>0$ to be arbitrary and constant $\alpha$ small enough, while the result in~\cite{KZ-BG-EB:2021} restrict the convergence rate to be small enough so that Zeno behavior can be excluded. It should be noted that Zeno behavior can be excluded for any $\beta>0$ by Theorem~\ref{Th}, while a uniform lower bound of the inter-event times can be ensured if $\beta$ is small (i.e., $\beta\leq \lambda$). Such a uniform bound can not be guaranteed according to Theorem~3 of~\cite{KZ-BG-EB:2021} since the Lyapunov function is in a quadratic form.
\end{remark}

{
\begin{remark}
It is worth mentioning that system (1) reduces to the linear control system when $A_2=0$, then Theorem~\ref{Th} can be applied to the delay-free linear systems. Compared with the event-triggering rule in [6], our event-triggering condition (with positive $\alpha$) potentially postpones the occurrence of each event, and hence leads to larger inter-event times, that is, the control inputs are updated less frequently than the event-triggered control method in [6] for linear control systems.
\end{remark}
}

Next, we propose an algorithm for event-triggered stabilization of the linear time-delay system~\eqref{system}.

\hskip-4mm\rule{8.9cm}{0.5pt}
\vskip-0mm
\hskip-4mm\textbf{Event-triggered control algorithm.}
\vskip-1.5mm
\hskip-4mm\rule{8.9cm}{0.5pt}
\begin{itemize}
\item[I.] Prescribe parameters $b>0$ and $h>0$.

\item[II.] Solve the linear matrix inequality~\eqref{LMI} for $Q$ and $R$. Then, derive the feedback control gain 
\[
K=RQ^{-1}.
\]

\item[III.] Select parameters $\alpha>0$ and $\sigma>0$ such that $\alpha+\sigma<h$.
\end{itemize}
Event times can then be determined by~\eqref{eventtime} with $P=Q^{-1}$.
\vskip-1mm
\hskip-4mm\rule{8.9cm}{0.5pt}

\section{Examples}\label{Sec5}
In this section, two examples are provided to demonstrate the proposed event-triggering algorithm.

\begin{example}\label{Example1}
Consider the scalar system~\eqref{system} from~\cite{KZ-BG:2021,KZ-BG-EB:2021} with $x(t)\in\mathbb{R}$, $A_1=0$, $A_2=-0.1$, $\tau=16$, $B=1$, and $K=-0.2$. As indicated in~\cite{KZ-BG-EB:2021}, the control system is asymptotically stable. 

It can be verified that~\eqref{LMI} is satisfied with $b=0.1$, $h=0.2$, and $Q=R=1$. Select $\alpha=0.09$ and $\sigma=0.1$ so that $h>\alpha+\sigma$. Thus, we conclude from Theorem~\ref{Th} that system~\eqref{ETCsystem} with event times determined by~\eqref{eventtime} is globally exponentially stable, as illustrated in Fig.~\ref{fig1} with $\beta=0.11$ and $\phi(s)=1$ for $s\in[-\tau,0]$. It can be seen that control input was not updated over the time interval $(0,3.5)$ because $2xPBK\epsilon$ is negative. However, the input needs to be updated for multiple times over the same time interval according to the event-triggering scheme in~\cite{KZ-BG-EB:2021} as $|\epsilon|$ is required to be bounded (see Fig.~1(b) in~\cite{KZ-BG-EB:2021}). 

It has been shown in~\cite{KZ-BG:2021} that the scalar system~\eqref{system} exhibits Zeno behavior with the event-triggering algorithm generalized from~\cite{PT:2007}. {To rule out Zeno behavior}, the impulsive control approach is combined with the feedback control in~\cite{KZ-BG:2021} so that a uniform lower bound of the inter-event times can be guaranteed, and the Lyapunov function method with the Razumihin technique was employed to ensure asymptotic stability of the closed-loop system. With the help of the Halanay-type inequality in Lemma~\ref{Lemma}, we showed that scalar system~\eqref{system} in this example can be exponentially stabilized by the event-triggering algorithm proposed in Section~\ref{Sec4} without using impulsive controls.
\end{example}

\begin{figure}[!t]
\centering
\includegraphics[width=3.4in]{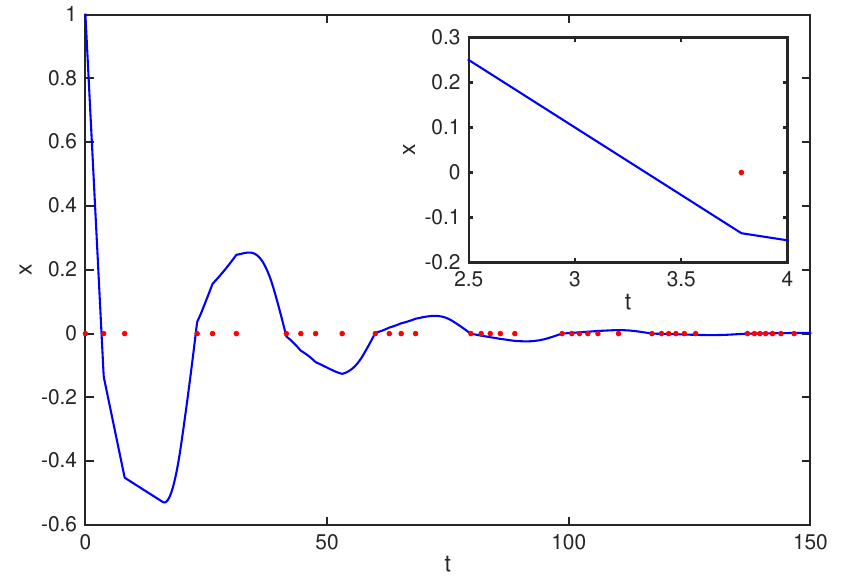}
\caption{Trajectory of system~\eqref{ETCsystem} in Example~\ref{Example1}. The red dots on the time axis indicate the event times determined by~\eqref{eventtime}.}
\label{fig1}
\end{figure}

\begin{example}\label{Example2}
Consider system~\eqref{system} with
\begin{align*}
A_1=\begin{bmatrix}
-1 & -0.5\\
3 & 2.5
\end{bmatrix},~A_2=\begin{bmatrix}
1.2 & 2\\
-0.4 & -1.2
\end{bmatrix},~B=\begin{bmatrix}
1\\
1
\end{bmatrix},
\end{align*}
and the fast-varying delay 
\[
\tau(t)=2-\sin(t^2),
\]
that is, the derivative of $\tau$ is unbounded. 

Next, we use the algorithm introduced in Section~\ref{Sec4} to design the event-triggered feedback control to exponentially stabilize system~\eqref{system}. 

Choose $b=1.1$ and $h=0.21$, then solving~\eqref{LMI} gives
\[
P=Q^{-1}=\begin{bmatrix}
1.5274 & 1.4575\\
1.4575 & 4.1300
\end{bmatrix} 
\]
and
\[
R=\begin{bmatrix}
-0.8221 & -0.7204
\end{bmatrix},
\] 
then $K=RP=\begin{bmatrix}
-2.3056 & -4.1733
\end{bmatrix}$. Select $\alpha=0.1$ and $\sigma=0.1$ such that $h>\alpha+\sigma$. Hence, Theorem~\ref{Th} concludes that system~\eqref{system} is globally exponentially stable with event times determined by~\eqref{eventtime}. {See Fig.~\ref{fig2} and Fig.~\ref{fig3} for numerical simulations of the event-triggered control system~\eqref{ETCsystem} with $\beta=1$ and different initial conditions.}

\end{example}

\begin{figure}[!t]
\centering
\includegraphics[width=3.4in]{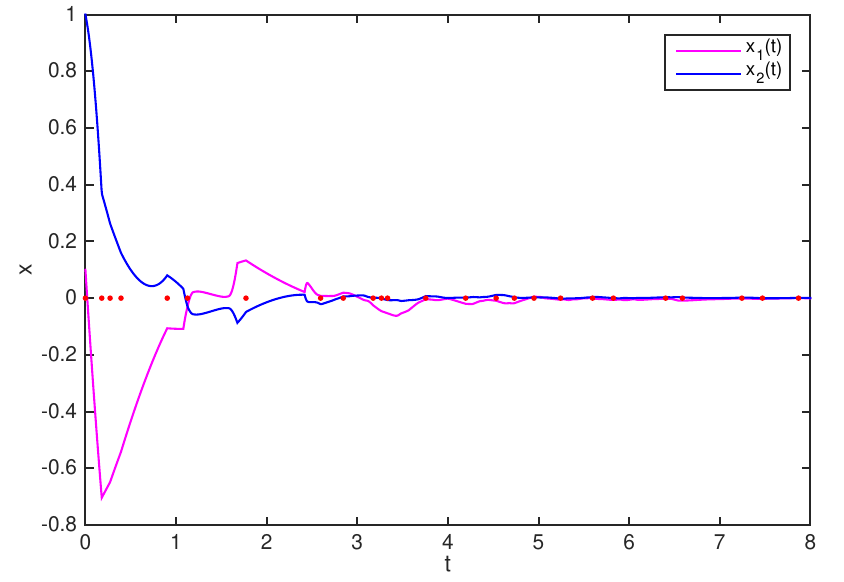}
\caption{Trajectories of system~\eqref{ETCsystem} in Example~\ref{Example2} with initial condition $\phi(s)=[0.1,~1]^\top$ for all $s\in[-3,0]$. Event times are indicated by the red dots on the time axis.}
\label{fig2}
\end{figure}

\begin{figure}[!t]
\centering
\includegraphics[width=3.4in]{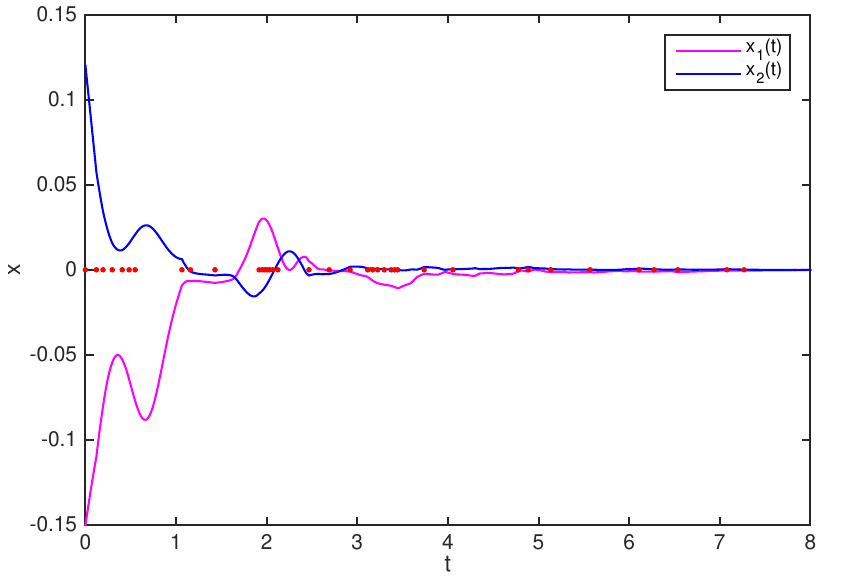}
\caption{Trajectories of system~\eqref{ETCsystem} in Example~\ref{Example2} with initial condition $\phi(s)=[-0.15\cos(3\pi s/2),~0.12\cos(\pi s)]^\top$ for all $s\in[-3,0]$. Event times are indicated by the red dots on the time axis.}
\label{fig3}
\end{figure}

\section{Conclusions}\label{Sec6}
In this paper, we have established a new Halanay-type inequality and designed a novel event-triggering scheme to exponentially stabilize
 the linear time-delay system. Sufficient conditions on the control gain and parameters in the event-triggering scheme have been derived to ensure exponential stability of the closed-loop system. Future research will focus on the extension of this work to general nonlinear systems with time delays and the study of time-delay effects in both the system dynamics and the control inputs.



\ifCLASSOPTIONcaptionsoff
  \newpage
\fi


%
%
%
%
%




\end{document}